\theoremstyle{plain}                       
\newtheorem{Theorem}{Theorem}}
\theoremstyle{plain}                       
\newtheorem{Corollary}{Corollary}}
\theoremstyle{plain}                       
\newtheorem{Proposition}{Proposition}}
\theoremstyle{plain}                       
\newtheorem{Definition}{Definition}}
\theoremstyle{plain}                       
\theoremstyle{plain}                       
\theoremstyle{plain}                       
\newtheorem{Remark}{Remark}}
\theoremstyle{plain}                       
\newtheorem{Lemma}{Lemma}}
\def\A{\mathcal{A}}
\def\B{\mathcal{B}}
\def\C{\mathcal{C}}
\def\N{\mathbb{N}}
\def\Z{\mathbb{Z}}
\def\M{\mathcal{M}}
\def\F{\mathcal{F}}
\def\S{\mathbb{S}}
\def\a{\alpha}
\def\b{\beta}
\def\d{\delta}
\begin{document}

\title[Erratum: Coding map for a contractive Markov system]{Erratum: Coding map for a contractive Markov system}
\author{Ivan Werner\\
{\small Email: ivan\_werner@mail.ru}}
\date{August 13, 2015}
\maketitle

\begin{abstract}\noindent
An error in the proof of Lemma 2 (ii) in [I. Werner, Math. Proc. Camb. Phil. Soc. 140(2) 333-347 (2006)], which claims the absolute continuity of dynamically defined measures (DDM), is identified. This undermines the assertion of the positivity of a DDM which provides a construction for equilibrium states in [I. Werner, J. Math. Phys. 52 122701 (2011)]. 
An explicit lower bound for the DDM appearing there is computed in the case when all maps of a contractive Markov system (CMS) are contractions, the probability functions are Dini-continuous and bounded away from zero, and there exists an equilibrium state of the CMS which is absolutely continuous with respect to the initial measure. In the case of the contraction only on average, a generalized construction is shown to provide a positive set function, but it is unknown whether it gives a measure on the Borel $\sigma$-algebra, and if it did, the measure would coincide with the original DDM. 

{\it MSC 2010}:  28A12, 28A35, 28A80, 82B26, 82C99, 37H99, 37A50,  60J05.

   {\it Keywords}:  Dynamically defined measures, Kullback-Leibler divergence, random systems with complete
connections, learning models, $g$-functions,  iterated function systems with place-dependent probabilities, contractive Markov systems, equilibrium states.
\end{abstract}

\tableofcontents

\section{Introduction}

Recently,  the author has found an error in the last step of the proof of Lemma 2 (ii) in \cite{Wer3} (the author is grateful to Boris M. Gurevich for the invitation to give a talk at the Dynamical Systems and Statistical Physics Seminar at the Lomonosov Moscow State University  during the preparation to which the error was discovered).  However, the main result of  \cite{Wer3} (Corollary 1) is correct even under much weaker conditions on a contractive Markov system (CMS), see Theorem 5 (ii) in \cite{Wer11}, than it was requited in all  articles which used the result (openness of the Markov partition, boundedness away from zero and Dini-continuity of the probability functions). In that respect,  the main result of \cite{Wer3} is already obsolete. 

However, the constructive approach which was taken there gave rise to the technique of dynamically defined measures (DDMs)  \cite{Wer10}\cite{Wer12} (they should not be confused with measures obtained as various limits, or in general
accumulation points, in some sense of other measures on the same $\sigma$-algebra), which allows to construct equilibrium states for such random dynamical systems, and Lemma 2 (ii) in \cite{Wer3} has been the only justification so far that the constructed measure is not zero. In that respect, the article deserves some further consideration. 

The lemma was needed for Corollary 1 in \cite{Wer3} only in the special case when $\nu$ is substituted with an invariant probability measure $\mu$ (in this article, we are using the notation from \cite{Wer3}), so that $\Phi(\mu)$ becomes a shift-invariant Borel probability measure $M$ (see Proposition 1 in \cite{Wer3}), $\lambda=1/N\sum_{i=1}^N\delta_{x_i}$, and  all probability functions $p_e|_{K_{i(e)}}$ are Dini-continuous and bounded away from zero. In this case, it is quite easy to see, from the last inequality on page 343 (p. 12 in the arXiv version) in \cite{Wer3}, that even a stronger result is true if all $w_e|_{K_{i(e)}}$'s are contractions on a bounded space,  namely there exists $0<c<\infty$ such that
\[M\leq c\Phi(\lambda). \]

In this article, we show, in particular, that the same is true if the boundedness condition on the space is removed by computing an explicit lower bound for $\Phi(\lambda)$ in terms of $M$ (Corollary \ref{fc}). No openness of the Markov partition is required for that, we only assume that $M$ is an equilibrium state (which is, for example, automatically the case when the CMS is non-degenerate \cite{Wer11}). 

In the case of the contraction only on average, it is clear that the DDM is not zero if the probability functions are constant, but there is no proof for that even if the probability functions are only Lipschitz. In Section \ref{gcs}, we show that a generalized construction provides a positive set function in this case, but we do not know whether it is a measure on the Borel $\sigma$-algebra, and if it were, it would coincide with $\Phi(\lambda)$.

Partially motivated by the discovered error, the technique of the dynamically defined measures has already been studied much closer and in a more general setup \cite{Wer16}\cite{Wer15}. It seems that, in this development, a new mathematical theory is taking shape, which we call the Dynamical Measure Theory. It is an extension of the classical Measure Theory which provides mathematical methods for construction of measures from sequences of contents (in a broad sense) which need not be consistent. The development in this article seems to indicate that the theory might be only at its beginning.

\section{General method}

Now, we will specify our method for the computation of a lower bound for $\Phi(\lambda)(\Sigma)$ in this article, which, according to the general investigation in \cite{Wer15}, is quite adequate for ergodic $M$. 

Let $\phi_0$ be a probability measure on $\A_0$ and $\Phi$ be the dynamically defined outer measure resulting from $\phi_0$ (as in Definition 2 in \cite{Wer3}). Let $\Lambda$ be a $S$-invariant Borel probability measure on $\Sigma$ such that $\Lambda\ll \phi_0$ (we will denote the restriction of $\Lambda$ on $\A_0$ also by $\Lambda$). Let $Z$ be a measurable version of the Radon-Nikodym derivative $d\Lambda/ d\phi_0$. Let $\B(\Sigma)$ denote the Borel $\sigma$-algebra on $\Sigma$.

We will use the result from \cite{Wer10} that the restriction of $\Phi$ on $\B(\Sigma)$  is a $S$-invariant measure and the following easily verifiable fact, e.g. Lemma 3 (Lemma 2 in the arXiv version) in \cite{Wer16}. For $Q\in\B(\Sigma)$, let $\dot\C(Q)$ denote the set of all $(A_m)_{m\leq 0}\in\C(Q)$ such that $A_i\cap A_j=\emptyset$ for all $i\neq j\leq 0$. Then 
\[\Phi(Q)=\inf\limits_{(A_m)_{m\leq 0}\in\dot\C(Q)}\sum\limits_{m\leq 0}\phi_0\left(S^mA_m\right)\ \ \ \mbox{ for all }Q\subset\B(\Sigma).\]

In the following, we will use the usual definitions $1/0:=\infty$, $1/\infty:=0$, $\log(0):=-\infty$ and $0\log 0:=0$

\begin{Definition}
Define
\[K\left(\Lambda|\phi_0\right):=\int  \log  Zd\Lambda.\]
It is called the {\it Kullback-Leibler divergence} of $\Lambda$ with respect to $\phi_0$.
Now, define  
\[ Z^*:=\sup\limits_{m\leq 0}Z\circ S^m\ \ \ \mbox{ and}\]
\[K^*(\Lambda|\phi_0):=\int  \log  Z^*d\Lambda.\]
\end{Definition}

It is easy to see that  $\int  \log  Z^*d\Lambda$ is well defined, $K(\Lambda|\phi_0)\leq K^*(\Lambda|\phi_0)$ and $K(\Lambda|\phi_0)= K^*(\Lambda|\phi_0)$ if $\phi_0$ is $S$-invariant. Recall that $K\left(\Lambda|\phi_{0}\right)\geq 0$ (which follows immediately from the fact that $x\log x\geq x-1$ for all $x\geq 0$).

We will use the finiteness of $K^*(\Lambda|\phi_0)$ for a computation of a lower bound for $\Phi(\Sigma)$ through the following lemma (we could refer to a stronger result which is already available in \cite{Wer15},  but, in this case, a simple proof can be given to make this paper more self contained). 
\begin{Lemma}\label{lbl}
 Suppose  $K^*(\Lambda|\phi_0)<\infty$.  Then \\
(i)
\[\Phi(Q)\geq \Lambda(Q)e^{-\frac{1}{\Lambda(Q)}\int\limits_{Q}\log Z^*d\Lambda}\ \ \ \mbox{ for all } Q\in \B(\Sigma)\mbox{ such that } \Lambda(Q)>0\mbox{, and}\]
(ii)
 \[\Phi(\Sigma)\geq e^{K(\Lambda|\hat\Phi) -K^*(\Lambda|\phi_0)}\]
 where $\hat\Phi = \Phi/\Phi(\Sigma)$ (in particular, $K(\Lambda|\hat\Phi)\leq K^*(\Lambda|\phi_0)$).
\end{Lemma}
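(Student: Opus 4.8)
The plan is to reduce both parts to the single measure inequality
\[\Phi(Q)\geq \int_Q \frac{1}{Z^*}\,d\Lambda\qquad\text{for all }Q\in\B(\Sigma),\]
which I regard as the heart of the matter. To establish it I would start from the covering formula for $\Phi$ recalled above and estimate each summand $\phi_0(S^mA_m)$ from below. Since $\Lambda\ll\phi_0$ with $d\Lambda/d\phi_0=Z$, one has $\phi_0(B)\geq\int_B (1/Z)\,d\Lambda$ for every $B\in\A_0$ (the integrand being read as supported on $\{Z>0\}$, whose complement is $\Lambda$-null). Applying this with $B=S^mA_m$ and then using the $S$-invariance of $\Lambda$ to change variables gives $\phi_0(S^mA_m)\geq\int_{A_m}(1/(Z\circ S^m))\,d\Lambda$. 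Because $m\leq 0$ and $Z^*=\sup_{k\leq 0}Z\circ S^k$, we have $Z\circ S^m\leq Z^*$, hence $\phi_0(S^mA_m)\geq\int_{A_m}(1/Z^*)\,d\Lambda$. For $(A_m)_{m\leq 0}\in\dot\C(Q)$ the sets $A_m$ are pairwise disjoint and cover $Q$, so summing and using $1/Z^*\geq 0$ yields $\sum_{m\leq 0}\phi_0(S^mA_m)\geq\int_Q(1/Z^*)\,d\Lambda$; taking the infimum over $\dot\C(Q)$ proves the displayed inequality.

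Part (i) then follows by Jensen's inequality. On the probability space $(Q,\Lambda/\Lambda(Q))$ apply the convex function $t\mapsto e^{-t}$ to $\log Z^*$ to get
\[\frac{1}{\Lambda(Q)}\int_Q\frac{1}{Z^*}\,d\Lambda=\frac{1}{\Lambda(Q)}\int_Q e^{-\log Z^*}\,d\Lambda\geq e^{-\frac{1}{\Lambda(Q)}\int_Q\log Z^*\,d\Lambda},\]
and combine with the measure inequality.

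For part (ii) I would feed (i) into the partition characterisation of the Kullback--Leibler divergence. For any finite Borel partition $\{Q_1,\dots,Q_n\}$ of $\Sigma$, apply (i) to each $Q_j$ with $\Lambda(Q_j)>0$, divide by $\Phi(\Sigma)$ to pass to $\hat\Phi$, take logarithms, multiply by $\Lambda(Q_j)$ and sum; the $\log Z^*$ terms assemble into $\int_\Sigma\log Z^*\,d\Lambda=K^*(\Lambda|\phi_0)$ and one obtains $\sum_j\Lambda(Q_j)\log(\Lambda(Q_j)/\hat\Phi(Q_j))\leq \log\Phi(\Sigma)+K^*(\Lambda|\phi_0)$. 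Since $K(\Lambda|\hat\Phi)$ is the supremum of these partition sums over all finite partitions, passing to the supremum gives $K(\Lambda|\hat\Phi)\leq\log\Phi(\Sigma)+K^*(\Lambda|\phi_0)$, which rearranges to the claim $\Phi(\Sigma)\geq e^{K(\Lambda|\hat\Phi)-K^*(\Lambda|\phi_0)}$. (Equivalently, one may observe that the measure inequality gives $\Lambda\ll\Phi$ with $d\Lambda/d\Phi\leq Z^*$ $\Lambda$-a.e., whence $K(\Lambda|\Phi)\leq K^*(\Lambda|\phi_0)$, and the identity $K(\Lambda|\hat\Phi)=\log\Phi(\Sigma)+K(\Lambda|\Phi)$ finishes it.) The parenthetical bound $K(\Lambda|\hat\Phi)\leq K^*(\Lambda|\phi_0)$ follows because the trivial cover $A_0=\Sigma$, $A_m=\emptyset$ for $m<0$ gives $\Phi(\Sigma)\leq\phi_0(\Sigma)=1$.

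The main obstacle I anticipate is the careful justification of the summand estimate: the change of variables must genuinely use the $S$-invariance of $\Lambda$ (through $\int f\circ S^m\,d\Lambda=\int f\,d\Lambda$), and one must control the null set $\{Z=0\}$ and confirm that $Z^*<\infty$ $\Lambda$-a.e.\ (which is where the hypothesis $K^*(\Lambda|\phi_0)<\infty$ enters) so that all the reciprocals and logarithms are legitimate. Everything after that inequality is soft: Jensen for (i), and the standard supremum-over-partitions description of the relative entropy for (ii).
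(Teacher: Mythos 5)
Your proof is correct, but the engine driving part (i) is genuinely different from the paper's. You first establish the measure-level inequality $\Phi(Q)\geq\int_Q (1/Z^*)\,d\Lambda$ by bounding each summand directly: $\phi_0(S^mA_m)\geq\int_{S^mA_m}(1/Z)\,d\Lambda=\int_{A_m}(Z\circ S^m)^{-1}\,d\Lambda\geq\int_{A_m}(1/Z^*)\,d\Lambda$, and then summing over the disjoint cover; a single application of Jensen's inequality on $(Q,\Lambda/\Lambda(Q))$ then yields (i). The paper instead interpolates with $Z^{1-\alpha}$, plays a convexity estimate against a H\"older estimate, and lets $\alpha\to 1$; crucially, this produces the exponent integral over $\bigcup_{m\leq 0}A_m$ rather than over $Q$, which forces a second, delicate step: first a crude bound with $\int\log^+Z^*\,d\Lambda$ (used to deduce $\Lambda\ll\Phi$), then a minimizing sequence of covers $(A^n_m)_{m\leq 0}$ with $\Phi(\bigcup_m A^n_m\setminus Q)\to 0$ so that $\int_{\bigcup_m A^n_m}\log Z^*\,d\Lambda\to\int_Q\log Z^*\,d\Lambda$. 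Your route avoids that localization argument entirely, because $1/Z^*\geq 0$ makes the restriction from $\bigcup_m A_m$ to $Q$ a trivial monotonicity of the integral. What your approach buys is in fact a strictly stronger intermediate statement, namely $\Phi\geq (1/Z^*)\Lambda$ as measures on $\B(\Sigma)$ (equivalently $\Lambda\ll\Phi$ with $d\Lambda/d\Phi\leq Z^*$ $\Lambda$-a.e.), which is essentially the ``relative entropy measure'' strengthening from \cite{Wer15} that the paper mentions but deliberately does not invoke; this also powers your cleaner alternative ending for (ii). For part (ii) proper, your argument and the paper's are the same feed-(i)-into-partition-sums computation, differing only in which standard characterisation of the Kullback--Leibler divergence is quoted: you use the supremum over finite partitions, the paper uses convergence along an increasing generating sequence of partitions (Theorem 4.1 in \cite{Sl}). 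The technical caveats you flag at the end are real but easily discharged: $\int\log^-Z^*\,d\Lambda\leq\int\log^-Z\,d\Lambda\leq 1/e$ since $-x\log x\leq 1/e$ on $[0,1]$, so $\log Z^*$ is quasi-integrable and $Z^*<\infty$, $Z>0$ hold $\Lambda$-a.e.; the paper disposes of this in the remark following its Definition 1.
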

\begin{proof}
(i)  Note that  by the hypothesis $\int\log^+ Z^*d\Lambda<\infty$.
 Let  $Q\in \B(\Sigma)$ such that $\Lambda(Q)>0$ and $(A_m)_{m\leq 0}\in\dot\C(Q)$.  Let $0<\alpha<1$. Observe that, by the convexity of $x\mapsto e^{-x}$,
\begin{eqnarray*}
\sum\limits_{m\leq 0}\int\limits_{S^mA_m}Z^{1-\alpha} d\phi_0&=&\sum\limits_{m\leq 0}\int\limits_{S^mA_m}e^{-\alpha\log Z}d\Lambda\\
&\geq&\sum\limits_{m\leq 0,\Lambda(A_m)>0}\Lambda(A_m)e^{-\frac{\alpha}{\Lambda(A_m)}\int\limits_{S^mA_m}\log Zd\Lambda}\\
&\geq&\sum\limits_{m\leq 0}\Lambda(A_m)e^{-\frac{\alpha}{\sum_{m\leq 0}\Lambda(A_m)}\sum\limits_{m\leq 0}\int\limits_{S^mA_m}\log Zd\Lambda}\\
&\geq&\Lambda(Q)e^{-\frac{\alpha}{\Lambda(\bigcup_{m\leq 0}A_m)}\int\limits_{\bigcup_{m\leq 0}A_m}\log Z^*d\Lambda}.
\end{eqnarray*}
On the other hand, by the concavity of $x\mapsto x^{1-\alpha}$ or the H\"{o}lder inequality, and then by the concavity of $x\mapsto x^{\alpha}$,
\begin{eqnarray*}
\sum\limits_{m\leq 0}\int\limits_{S^mA_m}Z^{1-\alpha} d\phi_0&\leq&\sum\limits_{m\leq 0}\phi_0(S^mA_m)^\alpha\left(\int\limits_{S^mA_m}Z d\phi_0\right)^{1-\alpha}\\
&=&\sum\limits_{m\leq 0}\Lambda(A_m)^{1-\alpha}\phi_0(S^mA_m)^\alpha\\
&\leq&\left(\sum\limits_{m\leq 0}\Lambda(A_m)\right)\sum\limits_{m\leq 0}\frac{\Lambda(A_m)}{\sum_{m\leq 0}\Lambda(A_m)}\left(\frac{\phi_0(S^mA_m)}{\Lambda(A_m)}\right)^\alpha\nonumber\\
&\leq&\left(\sum\limits_{m\leq 0}\Lambda(A_m)\right)^{1-\alpha}\left(\sum\limits_{m\leq 0}\phi_0(S^mA_m)\right)^\alpha.
\end{eqnarray*}
Hence,
\[\left(\sum\limits_{m\leq 0}\Lambda(A_m)\right)^{1-\alpha}\left(\sum\limits_{m\leq 0}\phi_0(S^mA_m)\right)^\alpha\geq \Lambda(Q)e^{-\frac{\alpha}{\Lambda(\bigcup_{m\leq 0}A_m)}\int\limits_{\bigcup_{m\leq 0}A_m}\log Z^*d\Lambda}.\]
Taking the limit as $\alpha\to 1$, gives
\begin{equation}\label{frei}
\sum\limits_{m\leq 0}\phi_0(S^mA_m)\geq\Lambda(Q) e^{-\frac{1}{\Lambda(\bigcup_{m\leq 0}A_m)}\int\limits_{\bigcup_{m\leq 0}A_m}\log Z^*d\Lambda}.
\end{equation}
Therefore,
\[ \Phi(Q)\geq\Lambda(Q) e^{-\frac{1}{\Lambda(Q)}\int\log^+ Z^*d\Lambda}.\]
This implies indirectly, that  $\Lambda\ll\Phi$.

Now, let  $(A^n_m)_{m\leq 0}\in\dot\C(Q)$ for all $n\in\N$ such that $\sum_{m\leq 0}\phi_m(A^n_m)\downarrow\Phi(Q)$ as $n\to\infty$. Then, since $\sum_{m\leq 0}\phi_m(A^n_m)\geq\Phi(\bigcup_{m\leq 0}A_m^n)\geq\Phi(Q)$, $\Phi(\bigcup_{m\leq 0}A_m^n\setminus Q)\to 0$, and therefore,  $\Lambda(\bigcup_{m\leq 0}A_m^n\setminus Q)\to 0$. Hence, by splitting $\log = \log^+ -\log^-$,  the finiteness of $K^*(\Lambda|\phi_0)$ implies  that
\[\int\limits_{\bigcup_{m\leq 0}A_m^n} \log Z^*d\Lambda\to \int\limits_{Q} \log Z^*d\Lambda\ \ \ (\mbox{ as }n\to\infty).\]
Therefore, by \ref{frei},
\[ \Phi(Q)\geq\Lambda(Q) e^{-\frac{1}{\Lambda(Q)}\int\limits_{Q}\log Z^*d\Lambda}.\]
This proves (i).

(ii) By (i), for every Borel measurable parition  $(Q_k)_{1\leq k\leq n}$ of $\Sigma$, 
\[\sum\limits^n_{k=1}\Lambda\left(Q_k\right)\log\frac{\Lambda(Q_k)}{\hat\Phi(Q_k)} - \int \log Z^*d\Lambda \leq\log\Phi(\Sigma).\]
Now, using the well-known fact that the sum in the last inequality converges to $K(\Lambda|\hat\Phi)$ if one chooses a sequence of partitions which is increasing with respect to the refinement and generates the $\sigma$-algebra (e.g. Theorem 4.1 in  \cite{Sl}), it follows that
\[K(\Lambda|\hat\Phi) - \int \log Z^*d\Lambda \leq\log\Phi(\Sigma),\]
which proves (ii). 
\end{proof}

\section{Application to CMS}

We will proceed now towards the application of Lemma \ref{lbl}  for the CMS.   Let us abbreviate $\M:=(K_{i(e)},w_e,p_e)_{e\in E}$. 

\begin{Definition}
Let $x_i\in K_i$ be fixed for all $i\in N$, as in \cite{Wer3}. Let
$\{i\in N:F(M)(K_i)>0\}\subset \S\subset \{1,...,N\}$ where $F(M)$ is the measure given by $M\circ F^{-1}$. Set $\lambda':=1/|\S|\sum_{i\in \S}\delta_{x_i}$ where $|\S|$ denotes the size of $\S$ (so that $\lambda$ from \cite{Wer3} becomes a particular case of $\lambda'$).
For $m\leq n\in\mathbb{Z}$, let $\mathcal{B}_{mn}$ denote the sub-$\sigma$-algebra of $\mathcal{A}_m$ generated by cylinder sets of the form $_m[e_m,...,e_n]$, $e_i\in E$ for all $m\leq i\leq n$. Then one easily sees by the definitions of the measures that  $M|_{\mathcal{B}_{mn}}\ll \Phi_m(\lambda')|_{\mathcal{B}_{mn}}$. Define the Radon-Nikodym derivatives 
\[Z_{mn}:=\frac{M|_{\mathcal{B}_{mn}}}{\Phi_m(\lambda')|_{\mathcal{B}_{mn}}}\mbox{ and }Z^x_{mn}:=\frac{P^m_x|_{\mathcal{B}_{mn}}}{\Phi_m(\lambda')|_{\mathcal{B}_{mn}}}\mbox{ for }x\in K.\] 
\end{Definition}
Observe that, since $M$ is $S$-invariant, and  $\Phi_m(\lambda')=\Phi_0(\lambda')\circ S^m$ for all $m\leq 0$, $Z_{m(m+k)}=Z_{0k}\circ S^m$ for all $m\leq 0$ and $k\geq 0$.  Furthermore, note that $(Z_{0n}, \mathcal{B}_{0n})_{n\in\N}$ is a $\Phi_0(\lambda')$-martingale with $\int Z_{0n}d\Phi_0(\lambda')=1$ for all $n\in\N$. Hence, by Doob's Martingale Theorem, $Z_{0\infty}:=\lim_{n\to\infty}Z_{0n}$ exists $\Phi_0(\lambda')$-a.e. 
\begin{Definition}
Set
 \[K_n(M|\Phi_0(\lambda')):=\int  Z_{0n}\log  Z_{0n}d\Phi_0(\lambda').\]
It is well known that $0\leq K_n(M|\Phi_0(\lambda'))\leq K_{n+1}(M|\Phi_0(\lambda'))$ for all $n\in\N$. 
Set 
\[K(M|\Phi_0(\lambda')):=\lim_{n\to\infty}K_n(M|\Phi_0(\lambda')),\]
which is another way to define the {\it Kullback-Leibler divergence} of measures. 
\end{Definition}

Recall that $K(M|\Phi_0(\lambda'))<\infty$ implies that $M\ll\Phi_0(\lambda')$ (e.g. Example 4.5.10  in \cite{Bog} Vol. 1). Furthermore, $M\ll\Phi_0(\lambda')$ implies that $Z_{0\infty} = Z$ $\Phi_0(\lambda')$-a.e. where $Z$ denotes, from now on, a measurable version of  $dM/d\Phi_0(\lambda')$, and
 $K(M|\Phi_0(\lambda'))=\int  Z_{0\infty}\log  Z_{0\infty}d\Phi_1(\lambda') = \int  \log  Z_{0\infty}dM$.

Now, let $\Sigma_G$ be the set of all two-sided infinite paths of the CMS, i.e.
\[\Sigma_G = \{\sigma\in\Sigma|\ i(\sigma_{i+1})=t(\sigma_i)\mbox{ for all }i\in\Z\}.\]
Set
\[b:= \sup\limits_{1\leq i \leq N}\sup\limits_{x\in K_i}\sum\limits_{e\in E,\ i(e) = i}p_e(x)d\left( w_e(x_{i(e)}), x_{t(e)}\right)\]
and
\[D:=\left\{\sigma\in\Sigma_G\left|\ \lim\limits_{m\to-\infty}w_{\sigma_0}\circ ... \circ w_{\sigma_m}(x_{i(\sigma_m)})\mbox{ exists}\right.\right\}.\]
For $\sigma\in\Sigma$, let 
\begin{equation*}
    F(\sigma):=\left\{\begin{array}{cc}
    \lim\limits_{m\to-\infty}w_{\sigma_0}\circ w_{\sigma_{-1}}\circ...\circ w_{\sigma_{m}}(x_{i(\sigma_{m})})&  \mbox{if }\sigma\in D\\
     x_{t(\sigma_0)}& \mbox{ otherwise.}
     \end{array}\right.
\end{equation*}
Let $\F$ denote the $\sigma$-algebra generated by cylinder sets of the form $_m[e_m,...,e_0]$, $e_m,...,e_0\in E$, $m\leq 0$. Set
\[E(\M):=\left\{\Lambda\in P_S(\Sigma)|\  \Lambda(D) = 1\mbox{ and }E_\Lambda(1_{_1[e]}|\F)=p_e\circ F\  \Lambda\mbox{-a.e. for all }e\in E\right\}.\]
By Corollary 1 (ii) in \cite{Wer11}, $M\in E(\M)$ if $\M$ is contractive, uniformly continuous and non-degenerate (see \cite{Wer11} for the definition of the non-degeneracy,  e.g. $\M$ is non-degenerate if all $K_i$' are open, but it also admits a large class of systems with proper Borel-measurable partitions). This already contains the main result from \cite{Wer3} that $M(D) = 1$ (by Theorem 5 (ii) in \cite{Wer11}, $M(D) = 1$ even under much weaker conditions), but we are now concerned with a proof that the outer measure $\Phi(\lambda')$ constructed in \cite{Wer3}, which was shown to define a $S$-invariant Borel measure in \cite{Wer10},  is not zero. 
\begin{Remark}\label{esr}
Note that for every $\Lambda\in E(\M)$, $1_{K_{i(\sigma_1)}}\circ F(\sigma) = 1$ for $\Lambda$-a.e. $\sigma\in\Sigma$, as
\begin{eqnarray*}
    &&\int 1_{K_{i(\sigma_1)}}\circ F(\sigma) d\Lambda(\sigma)=\sum\limits_{j=1}^N\sum\limits_{e\in E, i(e)=j}\int 1_{_1[e]}1_{K_j}\circ Fd\Lambda =\sum\limits_{j=1}^N\sum\limits_{e\in E, i(e)=j}\int p_e\circ F1_{K_j}\circ Fd\Lambda\\
&& =\sum\limits_{j=1}^N\int 1_{K_j}\circ Fd\Lambda = 1.
\end{eqnarray*}
Therefore, for continuous $w_e|_{K_{i(e)}}$'s, 
\[w_{\sigma_0}\circ ... \circ w_{\sigma_{m+1}}\circ F(S^{m}\sigma) = F(\sigma)\mbox{ for }\Lambda\mbox{-a.e. }\sigma\in\Sigma\mbox{ and all }m\leq 0.\]
\end{Remark}

Set \[C:=\sum_{j=1}^N\int_{K_j}d(x,x_j)d\mu(x),\]
\[\Delta(t):=\sup\limits_{e\in E}\sup\limits_{x,y\in K_{i(e)},\ d(x,y)\leq t}\left|p_e(x)-p_e(y)\right|\mbox{ for all }t\geq 0\mbox{, and}\]
\[d:=\sup\limits_{e\in E}d(w_ex_{i(e)}, x_{t(e)}).\]
Note that, by Lemma 14 in \cite{Wer11}, $C<b/(1-a)$ (clearly, $b\leq d<\infty$, since $E$ is finite). In the case when all $w_e|_{K_{i(e)}}$ are contraction, this can be strengthened to the following.

\begin{Lemma}\label{ucl}
If all $w_e|_{K_{i(e)}}$ are contractions with a contraction rate $0<a<1$, then
    \[d(F(\sigma),x_{i(\sigma_1)})\leq\frac{d}{1-a}\ \ \ \mbox{ for all }\sigma\in\Sigma_G.\]
\end{Lemma}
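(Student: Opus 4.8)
The plan is to establish the estimate separately on the two pieces in the definition of $F$. For $\sigma\in\Sigma_G\setminus D$ the bound is immediate: here $F(\sigma)=x_{t(\sigma_0)}$, and admissibility of $\sigma\in\Sigma_G$ means $t(\sigma_0)=i(\sigma_1)$, so $F(\sigma)=x_{i(\sigma_1)}$ and the distance is $0\leq d/(1-a)$. The substantive case is $\sigma\in D$, where I would pass through the finite compositions by a telescoping argument. Introduce, for $m\leq 0$, the point $y_m:=w_{\sigma_0}\circ\cdots\circ w_{\sigma_m}(x_{i(\sigma_m)})$, so that $F(\sigma)=\lim_{m\to-\infty}y_m$, and aim to bound $d(y_{-n},x_{i(\sigma_1)})$ uniformly in $n$.

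Two elementary observations drive the estimate. First, since each $w_e|_{K_{i(e)}}$ is a contraction with rate $a$ and admissibility ($t(\sigma_m)=i(\sigma_{m+1})$) makes the compositions composable with $w_{\sigma_0}\circ\cdots\circ w_{\sigma_m}:K_{i(\sigma_m)}\to K_{i(\sigma_1)}$, this $(1-m)$-fold composition is Lipschitz with constant $a^{1-m}$. Second, each consecutive increment can be written as the image under $w_{\sigma_0}\circ\cdots\circ w_{\sigma_m}$ of the two points $x_{i(\sigma_m)}$ and $w_{\sigma_{m-1}}(x_{i(\sigma_{m-1})})$, both lying in $K_{i(\sigma_m)}=K_{t(\sigma_{m-1})}$; their mutual distance is $d\!\left(w_{\sigma_{m-1}}(x_{i(\sigma_{m-1})}),x_{t(\sigma_{m-1})}\right)\leq d$ by the very definition of $d$. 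Combining the two gives $d(y_{m-1},y_m)\leq a^{1-m}d$, while the base term obeys $d(y_0,x_{i(\sigma_1)})=d\!\left(w_{\sigma_0}(x_{i(\sigma_0)}),x_{t(\sigma_0)}\right)\leq d$, again using $t(\sigma_0)=i(\sigma_1)$.

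A triangle-inequality telescoping then yields, for every $n\geq 0$,
\[
d(y_{-n},x_{i(\sigma_1)})\leq d(y_0,x_{i(\sigma_1)})+\sum_{k=1}^{n}d(y_{-k},y_{-(k-1)})\leq d\sum_{k=0}^{n}a^k\leq\frac{d}{1-a}.
\]
Letting $n\to\infty$, using $y_{-n}\to F(\sigma)$ for $\sigma\in D$ together with continuity of the metric, gives $d(F(\sigma),x_{i(\sigma_1)})\leq d/(1-a)$, completing the argument.

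The only real obstacle is bookkeeping rather than any analytic difficulty: one must track carefully which base point $x_i$ sits in which $K_i$, and invoke the admissibility relation $t(\sigma_m)=i(\sigma_{m+1})$ at two distinct places — once to guarantee that the maps actually compose, and once to recognise each increment as the controlled quantity $d\!\left(w_{\sigma_{m-1}}(x_{i(\sigma_{m-1})}),x_{t(\sigma_{m-1})}\right)$. Once the Lipschitz factor $a^{1-m}$ of the $m$-fold composition is correctly pinned down, the geometric series closes the bound with no further effort.
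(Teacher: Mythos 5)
Your proof is correct and follows essentially the same argument as the paper: the paper also telescopes $d(X_m(\sigma),x_{i(\sigma_1)})\leq\sum_{m\leq k\leq 0}d(X_k(\sigma),X_{k+1}(\sigma))\leq\sum_{m\leq k\leq 0}a^{-k}d$, using the contraction rate of the composed maps together with the definition of $d$ and admissibility, and then lets $m\to-\infty$. Your only addition is the explicit (trivial) treatment of $\sigma\in\Sigma_G\setminus D$, where $F(\sigma)=x_{t(\sigma_0)}=x_{i(\sigma_1)}$, which the paper leaves implicit.
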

\begin{proof}
Let $\sigma\in\Sigma_G$. Let us abbreviate 
\[X_m(\sigma):= w_{\sigma_0}\circ w_{\sigma_{-1}}\circ...\circ w_{\sigma_{m}}(x_{i(\sigma_{m})}) \mbox{ for all }m\leq 0\mbox{ and } X_1(\sigma):=x_{i(\sigma_{1})}.\] 
Then, for $m\leq 0$,
\begin{eqnarray*}
    d(X_m(\sigma),x_{i(\sigma_1)})\leq\sum\limits_{m\leq k\leq 0}d(X_k(\sigma),X_{k+1}(\sigma))\leq\sum\limits_{m\leq k\leq 0}a^{-k}d.
\end{eqnarray*}
Thus taking the limit, as $m\to-\infty$, implies the assertion. 
\end{proof}

In the following theorem, we also contemplate strengthening the continuity of the probability functions.

\begin{Theorem}\label{acms}
 Suppose $\M$ is contractive with a contraction rate $0<a<1$,  $p_e|_{K_{i(e)}}$'s are Dini-continuous, and there exists $\delta>0$ such that $p_e|_{K_{i(e)}}\geq\delta$ for all $e\in E$.  Suppose $M\in E(\M)$. Then the following holds true.
 
(i) For every $A\in\A_0$ and $0<q<1$,
\[\int\limits_A\log ZdM\leq M(A)\log |\S|+\frac{M(A)}{\delta}\left[\frac{-1}{q\log\frac{1}{a}}\log M(A)+\frac{1}{1-a^q}+\sum\limits_{i=0}^\infty\Delta\left(a^{(1-q)i}C\right)\right].\]

(ii) Suppose there exist $0<q<1$ and $\a>0$ such that $\sum_{i=0}^\infty\Delta\left(\frac{1}{a^q}(i+1)^\a a^{(1-q)i}C\right)<\infty$. Then
\begin{eqnarray*}
&&\int\limits_A\log ZdM\\
&\leq&M(A)\log|\S|+\frac{M(A)}{\delta}\left[\frac{\a}{q\log\frac{1}{a}}W\left(\frac{1}{M(A)^{\frac{1}{\a}}}\right)+\frac{2-a^q}{1-a^q}+\sum\limits_{i=0}^\infty\Delta\left(\frac{1}{a^q}(i+1)^\a a^{(1-q)i}C\right)\right]
\end{eqnarray*}
for all $A\in\A_0$ such that $M(A)>0$,	where $W$ denotes the principal branch of the Lambert function.

(iii) If all $w_e|_{K_{i(e)}}$ are contractions with a contraction rate $0<a<1$, then 
\[\log Z\leq\log |\S|+\frac{1}{\delta}\sum\limits_{i=0}^\infty\Delta\left(a^{i}\frac{d}{1-a}\right)\ \ \ M\mbox{-a.e.}\]
\end{Theorem}
\begin{proof}
Observe that the hypothesis implies that $w_e|_{K_{i(e)}}$'s are Lipschitz. Also, note that $Z_{0\infty} = Z_{1\infty}\circ S^{-1}$ $M$-a.e.

(i) Let $n\in\mathbb{N}$, and $(e_1,...,e_n)$ be a path. Observe that, by the convexity of $t\mapsto t\log t$,
\begin{eqnarray*}
    &&M\left(\L _1[e_1,...,e_n]\right)\log\frac{M\left(\L _1[e_1,...,e_n]\right)}{\frac{1}{|\S|}P^1_{x_{i(e_1)}}\left(\L _1[e_1,...,e_n]\right)}\\
&=&\int P^1_x\left(\L _1[e_1,...,e_n]\right)d\mu(x)\log\frac{\int P^1_x\left(\L _1[e_1,...,e_n]\right)d\mu(x)}{\frac{1}{|\S|}P^1_{x_{i(e_1)}}\left(\L _1[e_1,...,e_n]\right)}\\
&\leq&\int P^1_x\left(\L _1[e_1,...,e_n]\right)\log\frac{ P^1_x\left(\L _1[e_1,...,e_n]\right)}{\frac{1}{|\S|}P^1_{x_{i(e_1)}}\left(\L _1[e_1,...,e_n]\right)}d\mu(x).
\end{eqnarray*}
Hence,
\[\int\limits_{_1[e_1,...,e_n]}\log Z_{1n} dM\leq\int \int\limits_{_1[e_1,...,e_n]}\log Z^x_{1n}dP^1_xd\mu(x)\]
for all $_1[e_1,...,e_n]\in\mathcal{B}_{1n}$, and therefore,
\begin{equation}\label{fce}
\int\limits_{A}\log Z_{1n} dM\leq\int \int\limits_{A}\log Z^x_{1n}dP^1_xd\mu(x)
\end{equation}
for all $A\in\mathcal{B}_{1n}$. Let  $A\in\mathcal{B}_{1n}$. By Proposition 1 in \cite{Wer3} and Theorem 5 (ii) in \cite{Wer11} (Theorem 3.27 (ii) in the  DSDC version), $\mu = F(M)$. Then, since $M\in E(\M)$, by Lemma 4 (ii) (Lemma 3.8 (ii) in the  DSDC version) in \cite{Wer11},
\begin{eqnarray*}
  \int\limits_{A}\log Z_{1n} dM &\leq& \int\limits_{A}\log Z^{F(\sigma)}_{1n}(\sigma)dM(\sigma)\\
&=&  \int\limits_{A}\log\frac{p_{\sigma_1}(F(\sigma))...p_{\sigma_n}\left(w_{\sigma_{n-1}}\circ...\circ w_{\sigma_{1}}\circ F(\sigma)\right)}{\frac{1}  {|\S|}p_{\sigma_1}(x_{i(\sigma_1)})...p_{\sigma_n}\left(w_{\sigma_{n-1}}\circ...\circ w_{\sigma_{1}}x_{i(\sigma_1)}\right)}dM(\sigma)\\
    &=&M(A)\log|\S|+\sum\limits_{i=1}^n\int\limits_{A}\log\frac{p_{\sigma_i}\left(w_{\sigma_{i-1}}\circ...\circ w_{\sigma_{1}}\circ F(\sigma)\right)}{p_{\sigma_i}\left(w_{\sigma_{i-1}}\circ...\circ w_{\sigma_{1}}x_{i(\sigma_1)}\right)}dM(\sigma).\\
\end{eqnarray*}
Using the inquality $\log(x)\leq x-1$, it follows that
\begin{eqnarray*}
    &&\int\limits_{A}\log Z_{1n} dM\\
    &\leq&M(A)\log |\S| \\
    &&+\sum\limits_{i=1}^n\int\limits_{A}\frac{\left|p_{\sigma_i}\left(w_{\sigma_{i-1}}\circ...\circ w_{\sigma_{1}}\circ F(\sigma)\right)-p_{\sigma_i}\left(w_{\sigma_{i-1}}\circ...\circ w_{\sigma_{1}}x_{i(\sigma_1)}\right)\right|}{p_{\sigma_i}\left(w_{\sigma_{i-1}}\circ...\circ w_{\sigma_{1}}x_{i(\sigma_1)}\right)}dM(\sigma)\\
     &\leq&M(A)\log |\S| \\
     &&+\frac{1}{\d}\int\limits_{A}\sum\limits_{i=1}^n\left|p_{\sigma_i}\left(w_{\sigma_{i-1}}\circ...\circ w_{\sigma_{1}}\circ F(\sigma)\right)-p_{\sigma_i}\left(w_{\sigma_{i-1}}\circ...\circ w_{\sigma_{1}}x_{i(\sigma_1)}\right)\right|dM(\sigma)
\end{eqnarray*}
for all $A\in\mathcal{B}_{1n}$. For  $\sigma\in\Sigma$, define
\begin{equation}\label{sop}
f(\sigma):=\sum\limits_{i=1}^\infty\left|p_{\sigma_i}\left(w_{\sigma_{i-1}}\circ...\circ w_{\sigma_{1}}\circ F(\sigma)\right)-p_{\sigma_i}\left(w_{\sigma_{i-1}}\circ...\circ w_{\sigma_{1}}x_{i(\sigma_1)}\right)\right|.
\end{equation}
Then
\begin{equation}\label{dcub}
   \int\limits_{A}\log Z_{1n} dM\leq M(A)\log |\S| 
    +\frac{1}{\delta}\int\limits_{A}fdM
\end{equation}
for all $A\in\mathcal{B}_{1n}$. 

We show that $f\in \mathcal{L}^1(M)$.  Using Lemma 4 (ii)  in \cite{Wer11} (Lemma 3.8 (ii) in the  DSDC version), and the contraction on average,
\begin{eqnarray}\label{ecp}
  && \int d\left(w_{\sigma_{i}}\circ...\circ w_{\sigma_{1}}\circ F(\sigma), w_{\sigma_{i}}\circ...\circ w_{\sigma_{1}}x_{i(\sigma_1)}\right)dM(\sigma)\nonumber\\
&\leq& \int\int d\left(w_{\sigma_{i}}\circ...\circ w_{\sigma_{1}}x, w_{\sigma_{i}}\circ...\circ w_{\sigma_{1}}x_{i(\sigma_1)}\right)dP^1_x(\sigma)d\mu(x)\nonumber\\
&\leq&a^iC
\end{eqnarray}
for all $i\in\N$. Let $0<q<1$. For $i\in\N\cup\{0\}$, set
\[A_i:=\left\{\sigma\in\Sigma|\ d\left(w_{\sigma_{i}}\circ...\circ w_{\sigma_{1}}\circ F(\sigma), w_{\sigma_{i}}\circ...\circ w_{\sigma_{1}}x_{i(\sigma_1)}\right)>a^{(1-q)i}C\right\}\]
Then, by \eqref{ecp}, 
\[M(A_i)\leq a^{qi}\mbox{ for all }i\geq 0.\]
Hence, for every $A\in\B(\Sigma)$,
\begin{eqnarray*}
    \int\limits_A fdM&\leq&\sum\limits_{i=1}^\infty M\left(A\cap A_{i-1}\right)+M(A)\sum\limits_{i=1}^\infty\Delta\left(a^{(1-q)(i-1)}C\right)\\
    &\leq&\sum\limits_{i=0}^\infty \min\left\{M(A),a^{qi}\right\}+M(A)\sum\limits_{i=0}^\infty\Delta\left(a^{(1-q)i}C\right).
\end{eqnarray*}
Observe that the case $M(A)\leq a^{qi}$ is equivalent to $i\leq \log M(A)/(q\log a)$. Hence,
\begin{eqnarray*}
	\int\limits_A fdM&\leq&\frac{1}{q\log a}M(A)\log M(A)+M(A)+\sum\limits_{i>\frac{\log M(A)}{q\log a}} a^{qi}+M(A)\sum\limits_{i=0}^\infty\Delta\left(a^{(1-q)i}C\right)\\
	&\leq&\frac{1}{q\log a}M(A)\log M(A)+\frac{M(A)}{1-a^q}+M(A)\sum\limits_{i=0}^\infty\Delta\left(a^{(1-q)i}C\right)
\end{eqnarray*}
for all $A\in\B(\Sigma)$. In particular, by the hypothesis, $f\in\mathcal{L}^1(M)$. Furthermore, by \eqref{dcub},
\[\log Z_{1n}\leq\log |\S| +\frac{1}{\delta}E_M(f|\mathcal{B}_{1n})\ \ \ \ M\mbox{-a.e.}\]
Therefore, since $\mathcal{B}_{1n}\uparrow\A_1$  ($n\to\infty$),
\begin{equation*}
  \log Z_{1\infty}\leq\log |\S| +\frac{1}{\delta}E_M(f|\A_1)\ \ \ \ M\mbox{-a.e.}
\end{equation*}
By Shiryaev's Local Absolute Continuity Theorem (e.g. Theorem 2, p. 514 in \cite{Shi}), this implies that $M\ll\Phi_1(\lambda')$ and $Z_{1\infty}=d\Phi_1(\lambda')/dM$. 
Thus, since $Z_{0\infty}=Z_{1\infty}\circ S^{-1}$, 
\begin{equation}\label{fre}
\log Z_{0\infty}\leq\log |\S| +\frac{1}{\delta}E_M(f|\A_0)\ \ \ \ M\mbox{-a.e.},
\end{equation}
which implies (i).

(ii) Let $\b>0$. For $i\in\N\cup\{0\}$, define
\[B_i:=\left\{\sigma\in\Sigma|\ d\left(w_{\sigma_{i}}\circ...\circ w_{\sigma_{1}}\circ F(\sigma), w_{\sigma_{i}}\circ...\circ w_{\sigma_{1}}x_{i(\sigma_1)}\right)>\b(i+1)^\a a^{(1-q)i}C\right\}.\]
Then, by \eqref{ecp},
\[M\left(B_i\right)\leq\frac{1}{\b(i+1)^\a}a^{qi}\ \ \ \mbox{ for all }i\geq 0.\]
Hence, for every $A\in\B(\Sigma)$ with $M(A)>0$,
\begin{eqnarray*}
	\int\limits_A fdM&\leq&\sum\limits_{i=1}^\infty M\left(A\cap B_{i-1}\right)+M(A)\sum\limits_{i=1}^\infty\Delta\left(\b i^\a a^{(1-q)(i-1)}C\right)\\
	&\leq&\sum\limits_{i=0}^\infty \min\left\{M(A),\frac{1}{\b(i+1)^\a}a^{qi}\right\}+M(A)\sum\limits_{i=0}^\infty\Delta\left(\b(i+1)^\a a^{(1-q)i}C\right).
\end{eqnarray*}
Consider the case $M(A)\leq 1/(\b(i+1)^\a)a^{qi}$. A straightforward computation shows that it is equivalent to 
\[(i+1)\frac{q}{\a}\log\frac{1}{a}e^{(i+1)\frac{q}{\a}\log\frac{1}{a}}\leq\frac{\frac{q}{\a}\log\frac{1}{a}e^{\frac{q}{\a}\log\frac{1}{a}}}{(\b M(A))^\frac{1}{\a}},\mbox{ i.e.}\]
\[(i+1)\frac{q}{\a}\log\frac{1}{a}\leq W\left(\frac{\frac{q}{\a}\log\frac{1}{a}e^{\frac{q}{\a}\log\frac{1}{a}}}{(\b M(A))^\frac{1}{\a}}\right).\]
That is
\[i+1\leq g(M(A))\]
where
\[g(M(A)):=\frac{\a}{q\log\frac{1}{a}}W\left(\frac{\frac{q}{\a}\log\frac{1}{a}e^{\frac{q}{\a}\log\frac{1}{a}}}{(\b M(A))^\frac{1}{\a}}\right),\]
and therefore,
\begin{eqnarray*}
	\int\limits_A fdM&\leq&\sum\limits_{i>g(M(A))-1}\frac{1}{\b(i+1)^\a}a^{qi}+M(A)g(M(A))+M(A)\sum\limits_{i=0}^\infty\Delta\left(\b(i+1)^\a a^{(1-q)i}C\right)\\
	&\leq&\frac{a^{[g(M(A))]q}}{\b}\sum\limits_{i\geq 0}\frac{1}{(i+[g(M(A))]+1)^\a}a^{qi}+M(A)g(M(A))\\
	&&+M(A)\sum\limits_{i=0}^\infty\Delta\left(\b(i+1)^\a a^{(1-q)i}C\right)
\end{eqnarray*}
where $[g(M(A))]$ denotes the integer such that $g(M(A))-1<[g(M(A))]\leq g(M(A))$. Observe that
\begin{eqnarray*}
&&a^{[g(M(A))]q}\leq a^{(g(M(A))-1)q}=a^{-q}e^{-\a W\left(\frac{\frac{q}{\a}\log\frac{1}{a}e^{\frac{q}{\a}\log\frac{1}{a}}}{(\b M(A))^\frac{1}{\a}}\right)}\\
&=&a^{-q}\b M(A)\left(\frac{W\left(\frac{\frac{q}{\a}\log\frac{1}{a}e^{\frac{q}{\a}\log\frac{1}{a}}}{(\b M(A))^\frac{1}{\a}}\right)}{\frac{q}{\a}\log\frac{1}{a}e^{\frac{q}{\a}\log\frac{1}{a}}}\right)^\a = a^{-q}\b M(A)e^{-q\log\frac{1}{a}}g(M(A))^\a = \b M(A)g(M(A))^\a.
\end{eqnarray*}
Hence,
\[\frac{a^{[g(M(A))]q}}{\b}\sum\limits_{i\geq 0}\frac{1}{(i+[g(M(A))]+1)^\a}a^{qi}\leq M(A)\sum\limits_{i\geq 0}\left(\frac{g(M(A))}{i+[g(M(A))]+1}\right)^\a a^{qi}\leq\frac{M(A)}{1-a^q}.\]
Therefore,
	\[	\int\limits_A fdM\leq \frac{M(A)}{1-a^q}+M(A)g(M(A))+M(A)\sum\limits_{i=0}^\infty\Delta\left(\b(i+1)^\a a^{(1-q)i}C\right).\]
Now, using $W(xy)\leq W(x)+y$ and setting $\b:=1/a^q$, 
\begin{eqnarray*}
	M(A)g(M(A)) &=& \frac{M(A)\alpha}{q\log\frac{1}{a}}W\left(\frac{\frac{q}{\a}\log\frac{1}{a}e^{\frac{q}{\a}\log\frac{1}{a}}}{(\b M(A))^\frac{1}{\a}}\right)= \frac{M(A)\a}{q\log\frac{1}{a}}W\left(\left(\frac{\left(\frac{1}{a}\right)^{q}}{\b M(A)}\right)^\frac{1}{\a}\frac{q}{\a}\log\frac{1}{a}\right)\\
	&\leq&\frac{M(A)\a}{q\log\frac{1}{a}}W\left(\left(\frac{1}{M(A)}\right)^\frac{1}{\a}\right)+M(A), 
\end{eqnarray*}
which implies (ii).

(iii)  By Remark \ref{esr} and Lemma \ref{ucl}, for $M$-a.e. $\sigma\in\Sigma_G$,
\begin{eqnarray*}
  f(\sigma)&\leq& \sum\limits_{i=0}^\infty \Delta\left(d\left(w_{\sigma_{i}}\circ...\circ w_{\sigma_{1}}\circ F(\sigma), w_{\sigma_{i}}\circ...\circ w_{\sigma_{1}}x_{i(\sigma_1)}\right)\right)\\
&\leq& \sum\limits_{i=0}^\infty \Delta\left(a^id\left(F(\sigma), x_{i(\sigma_1)}\right)\right)\\
&\leq& \sum\limits_{i=0}^\infty \Delta\left(a^i\frac{d}{1-a}\right).
\end{eqnarray*}
Hence, by \eqref{fre}, since $M(\Sigma_G)=1$,
\[ \log Z_{1\infty}\leq\log |\S| +\frac{1}{\delta} \sum\limits_{i=0}^\infty \Delta\left(a^i\frac{d}{1-a}\right)\ M\mbox{-a.e.}\]
Thus (iii) follows.
\end{proof}

In the case when all maps of the CMS are contractions, we obtain the following lower bound for the DDM.

 \begin{Corollary}\label{fc}
Suppose $\M$ is non-degenerate such that all $p_e|_{K_{i(e)}}$'s are Dini-continuous,  there exists $\delta>0$ such that $p_e|_{K_{i(e)}}\geq\delta$ for all $e\in E$ and all $w_e|_{K_{i(e)}}$'s are contractions with a contraction rate $0<a<1$. Then 
\begin{eqnarray*}
\Phi(\lambda')(Q)&\geq& M(Q)\frac{1}{|\S|}e^{ -\frac{1}{\delta} \sum\limits_{i=0}^{\infty}\Delta\left(a^i\frac{d}{1-a}\right)}\ \ \ \mbox{ for all } Q\in \B(\Sigma).
\end{eqnarray*}
\end{Corollary}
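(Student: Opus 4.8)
The plan is to read the Corollary as a direct specialization of the general lower bound of Lemma \ref{lbl} (i) to the CMS, taking $\phi_0=\Phi_0(\lambda')$, $\Lambda=M$, so that $\Phi=\Phi(\lambda')$ and $Z=Z_{0\infty}=dM/d\Phi_0(\lambda')$, and then feeding in the pointwise bound of Theorem \ref{acms} (ii). First I would verify that the hypotheses place $M$ in $E(\M)$, which is what licenses the use of Theorem \ref{acms} (ii): the maps $w_e|_{K_{i(e)}}$ being contractions make $\M$ contractive, the Dini-continuity of the $p_e$'s together with the Lipschitz property of the contractions makes $\M$ uniformly continuous, and non-degeneracy is assumed outright, so Corollary 1 (ii) in \cite{Wer11} yields $M\in E(\M)$.

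Next I would promote the almost-everywhere bound on $Z_{0\infty}$ to one on $Z^*$. Abbreviating $B:=\frac{1}{\delta}\sum_{i=0}^\infty\Delta(a^i d/(1-a))$, which is finite precisely because Dini-continuity forces this geometric sum to converge, Theorem \ref{acms} (ii) reads $Z=Z_{0\infty}\leq|S|e^{B}$ $M$-a.e. Since $M$ is $S$-invariant, for each $m\leq 0$ we have $M(\{Z\circ S^m>|S|e^{B}\})=M(\{Z>|S|e^{B}\})=0$, and the countable supremum over $m\leq 0$ gives $Z^*=\sup_{m\leq 0}Z\circ S^m\leq|S|e^{B}$ $M$-a.e., hence $\log Z^*\leq\log|S|+B$ $M$-a.e. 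Combined with $Z^*\geq Z$, which forces $\int\log Z^*\,dM\geq K(M|\Phi_0(\lambda'))\geq 0$, this shows $0\leq K^*(M|\Phi_0(\lambda'))\leq\log|S|+B<\infty$, so the standing hypothesis $K^*(\Lambda|\phi_0)<\infty$ of Lemma \ref{lbl} is met (the absolute continuity $M\ll\Phi_0(\lambda')$ having already been established in the proof of Theorem \ref{acms} via Shiryaev's theorem and $S$-invariance).

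Finally I would apply Lemma \ref{lbl} (i). For $Q\in\B(\Sigma)$ with $M(Q)>0$ it gives $\Phi(\lambda')(Q)\geq M(Q)\exp(-\frac{1}{M(Q)}\int_Q\log Z^*\,dM)$, and the uniform estimate $\log Z^*\leq\log|S|+B$ yields $\int_Q\log Z^*\,dM\leq M(Q)(\log|S|+B)$, so the exponent is at least $-(\log|S|+B)$ and the claimed inequality $\Phi(\lambda')(Q)\geq M(Q)\frac{1}{|S|}e^{-B}$ follows. For $Q$ with $M(Q)=0$ the right-hand side vanishes and the inequality is trivial, so it holds for all $Q\in\B(\Sigma)$.

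The main obstacle is not any single computation but the bookkeeping linking the two divergences: I must identify the abstract density $Z$ of the general method with the martingale limit $Z_{0\infty}$ of the CMS section, and I must propagate the a.e. bound proved only for $Z_{0\infty}$ up to the supremum $Z^*$ that actually enters Lemma \ref{lbl}. This propagation is exactly where $S$-invariance of $M$ is indispensable, and it is the one step that would break down without it.
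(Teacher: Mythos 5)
Your proposal is correct and takes essentially the same route as the paper: the paper's own proof consists precisely of the two citations you use (Corollary 1 (ii) of \cite{Wer11} to get $M\in E(\M)$, then Lemma \ref{lbl} (i) combined with Theorem \ref{acms} (ii)). The details you supply --- identifying $Z$ with $Z_{0\infty}$, propagating the $M$-a.e.\ bound to $Z^*=\sup_{m\leq 0}Z\circ S^m$ via the $S$-invariance of $M$, verifying $K^*(M|\Phi_0(\lambda'))<\infty$, and handling the case $M(Q)=0$ separately --- are exactly the bookkeeping the paper leaves implicit, done correctly.
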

\begin{proof}
By Corollary 1 (ii) in \cite{Wer11}, $M\in E(\M)$.  Thus the assertion follows by  Lemma \ref{lbl} (i) and Theorem \ref{acms} (iii).
\end{proof}

\section{A generalized construction}\label{gcs}

A simple way to obtain a positive set function which fixes the problem in \cite{Wer3} is through the following generalized construction.

\begin{Definition}
	Let $0<\beta\leq 1$. For $Q\subset\Sigma$, define
	\[\Phi_\beta(Q):=\inf\limits_{(A_m)_{m\leq 0}\in\C(Q)}\left(\sum\limits_{m\leq 0}\phi_0\left(S^mA_m\right)^\beta\right)^\frac{1}{\beta}.\]
\end{Definition}
However, we do not know whether this set function is a measure on $\B(\Sigma)$ for $0<\beta< 1$. Moreover, if it were, it would coincide with $\Phi$, as then
\begin{equation}\label{ubph}
  \Phi_\beta(Q)\leq\Phi_\beta\left(\bigcup\limits_{m\leq 0}A_m\right)\leq\sum\limits_{m\leq 0}\Phi_\beta(A_m)\leq\sum\limits_{m\leq 0}\phi_0(S^mA_m)
\end{equation}
for all $(A_m)_{m\leq 0}\in\C(Q)$, and, obviously, $\Phi_\beta\geq\Phi$. (Note that $\Phi_\beta(A)=\Phi(A)$  for all $A\in\bigcup_{m\leq 0}\A_m$ if $\phi_0\circ S^{-1}=\phi_0$, by Proposition 2 in \cite{Wer10}.)

Observe that only the countable subadditivity of $\Phi_\beta$ is used in \eqref{ubph}. The already known technique for obtaining countably subadditive set functions is the following.

\begin{Definition}
	Let $0<\beta\leq 1$. For $Q\subset\Sigma$, define
	\[\Phi^*_\beta(Q):=\inf\limits_{(A_m)_{m\leq 0}\in\C(Q)}\sum\limits_{m\leq 0}\Phi_\beta\left(A_m\right).\]
\end{Definition}
And, as expected, we obtain the following identity.
\begin{Proposition}
	Let $0<\beta\leq 1$. Then
	\[\Phi^*_\beta(Q)=\Phi(Q)\ \ \ \mbox{ for all }Q\subset\Sigma.\]
\end{Proposition}
\begin{proof}
	Let $Q\subset\Sigma$. Then, on one hand,
	\[\sum\limits_{m\leq 0}\Phi_\beta\left(A_m\right)\geq\sum\limits_{m\leq 0}\Phi\left(A_m\right)\geq\Phi\left(\bigcup\limits_{m\leq 0}A_m\right)\geq\Phi(Q),\]
	 and on the other,
	\[\Phi^*_\beta(Q)\leq\sum\limits_{m\leq 0}\Phi_\beta\left(A_m\right)\leq\sum\limits_{m\leq 0}\phi_0\left(S^mA_m\right)\]
	for all $(A_m)_{m\leq 0}\in\C(Q)$, which implies the assertion.
\end{proof}

Now, we show the absolute continuity of $M$ with respect to $\Phi_\beta$ for certain values of $\b$ in the setting of \cite{Wer3}.
\begin{Lemma}
  Suppose $\M$ is contractive with a contraction rate $0<a<1$,  $p_e|_{K_{i(e)}}$'s are Dini-continuous, and there exists $\d>0$ such that $p_e|_{K_{i(e)}}\geq\delta$ for all $e\in E$.  Suppose $M\in E(\M)$. Let $0<\beta<\d\log(1/a)/(1+\d\log(1/a))$ and $q:=\beta/((1-\beta)\d\log(1/a))$. Then
  \[\Phi_\beta(Q)\geq M(Q)^\frac{1}{\b}\frac{1}{|\S|}e^{-\frac{1}{\d}\left[\frac{1}{1-a^q}+\sum\limits_{i=0}^\infty\Delta\left(a^{(1-q)i}C\right)\right]}\ \ \ \mbox{ for all }Q\in\B(\Sigma).\]
\end{Lemma}
\begin{proof}
 Let $Q\in\B(\Sigma)$ and $(A_m)_{m\leq 0}\in\C(Q)$. Then, by the well-known inequality\\ $M(A)\log(M(A)/\phi_0(A))\leq\int_A\log ZdM$ (e.g. Lemma 4 (i) in \cite{Wer15}), Theorem \ref{acms} (i) and the choice of $\b$ and $q$,
 \begin{eqnarray*}
   &&\sum\limits_{m\leq 0}\phi_0\left(S^mA_m\right)^\b\\
   &\geq&\sum\limits_{m\leq 0,M(A_m)>0}M(A_m)e^{-\frac{1}{M(A_m)}\left(\b M(A_m)\log\frac{M(A_m)}{\phi_0\left(S^mA_m\right)}+(1-\b)M(A_m)\log M(A_m)\right)}\\
   &\geq&\sum\limits_{m\leq 0,M(A_m)>0}M(A_m)e^{-\frac{1}{M(A_m)}\left(\b\int\limits_{S^mA_m}\log ZdM+(1-\b)M(A_m)\log M(A_m)\right)}\\
   &\geq&\sum\limits_{m\leq 0,M(A_m)>0}M(A_m)e^{-\frac{\b}{M(A_m)}\left(M(A_m)\log |\S|+\frac{M(A_m)}{\delta}\left[\frac{1}{1-a^q}+\sum\limits_{i=0}^\infty\Delta\left(a^{(1-q)i}C\right)\right]\right)}\\
   &\geq&M(Q)\frac{1}{|\S|^\b}e^{-\frac{\b}{\d}\left[\frac{1}{1-a^q}+\sum\limits_{i=0}^\infty\Delta\left(a^{(1-q)i}C\right)\right]}.
 \end{eqnarray*}
 Thus
 \[\left(\sum\limits_{m\leq 0}\phi_0\left(S^mA_m\right)^\b\right)^\frac{1}{\b}\geq M(Q)^\frac{1}{\b}\frac{1}{|\S|}e^{-\frac{1}{\d}\left[\frac{1}{1-a^q}+\sum\limits_{i=0}^\infty\Delta\left(a^{(1-q)i}C\right)\right]},\]
 which implies the assertion.
\end{proof}

\end{document}